\def\beq{\begin{equation}}
\def\eeq{\end{equation}}
\def\baq{\begin{eqnarray}}
\def\eaq{\end{eqnarray}}
\def\baqn{\begin{eqnarray*}}
\def\eaqn{\end{eqnarray*}}
\newcommand{\ball}{\mathbb{B}}
\theoremstyle{plain}
\newtheorem{definition}{Definition}
\newtheorem{remark}{Remark}
\newtheorem{example}{Example}
\newtheorem{theorem}{Theorem}
\newtheorem{lemma}[theorem]{Lemma}
\newcommand{\R}{{\mathbb R}}
\newcommand{\interior}{{\rm int}\kern 0.06em}
\def\<{\langle}
\def\>{\rangle}
\renewcommand*{\backrefalt}[4]{%
\ifcase #1 %
(Not cited)%
\or
(Cited on p.~#2)%
\else
(Cited on pp.~#2)%
\fi
}
\begin{document}
\title{ {Sliding Mode Observers for Set-valued Lur'e  Systems with Uncertainties Beyond Observational Range}}
\author{Samir Adly\thanks{Laboratoire XLIM, Universit\'e de Limoges,
123 Avenue Albert Thomas,
87060 Limoges CEDEX, France\vskip 0mm
Email: \texttt{samir.adly@unilim.fr}} \qquad 
 Jun Huang\thanks{School of Mechanical and Electrical Engineering, Soochow University, China\vskip 0mm
 Email: \texttt{cauchyhot@163.com}}\qquad
 Ba Khiet Le \thanks{Optimization Research Group, Faculty of Mathematics and Statistics, Ton Duc Thang University, Ho Chi Minh City, Vietnam\vskip 0mm
 E-mail: \texttt{lebakhiet@tdtu.edu.vn}}
 }
\date{}
\maketitle

\begin{abstract}
{In this paper, we introduce a new  sliding mode observer for Lur'e set-valued dynamical systems, 
particularly addressing challenges posed by uncertainties not within the standard range of observation. 
{Traditionally, {most of }Luenberger-like observers and  sliding mode observer  have been designed only for  uncertainties in the range of observation.
Central to our approach is the treatment of the uncertainty term  which we decompose into two 
components: the first part in the observation subspace and the second part in its complemented subspace. We establish that when the second part converges to zero, 
 an exact sliding mode observer for the system can be obtained.} In scenarios where this convergence 
does not occur, our methodology allows for the estimation of errors between the actual state and the observer 
state.  This leads to a practical interval estimation technique, valuable in situations where part of the 
uncertainty lies outside the observable range. Finally, we  show that our observer is also a  $T$- observer  as well as {a strong $H^\infty$ observer}.
}

\end{abstract}

{\bf Keywords.} Sliding mode observer; set-valued Lur'e systems;  uncertainties outside the  \\ observation subspace\\

{\bf AMS Subject Classification.} 28B05, 34A36, 34A60, 49J52, 49J53, 93D20

\section{Introduction}
{
Differential inclusion serves as an extension of ordinary differential equations, playing a pivotal role as a mathematical model within applied mathematics and control theory.
A set-valued Lur'e dynamical system, a form of differential inclusion, incorporates interconnection feedback, broadening its scope to encompass various models frequently used in the analysis of nonsmooth dynamical systems. Such models include evolution variational inequalities, projected dynamical systems, relay systems, and complementarity systems.
Set-valued Lur'e dynamical systems have been the subject of thorough investigations in recent decades (we refer, for example, to \cite{Acary, ahl, ahl2, bg, bg2, brogliato, bt, cs, L1, Lurie, abc0, abc} and the references therein).
An observer is a mathematical model used to reconstruct complete information about the state variables from partially measurable data. Observers design  for set-valued Lur'e systems represents a significant practical challenge, alongside other fundamental issues like well-posedness and stability analysis, which have received extensive attention in the literature. 
Numerous references addressing observer design for set-valued Lur'e systems are available, with a predominant focus on two methodological approaches: Luenberger-like techniques \cite{bh, Huang, Huang1, Huang3, abc0, abc} and sliding mode methodologies \cite{al,L3}.
} 

{The  Luenberger-like observers for set-valued Lur'e systems was initially explored by Brogliato-Heemels in \cite{bh}, which is an extension of  the work of Arcak-Kokotovic for the nonlinear case \cite{Arcak}.   Brogliato-Heemels considered the differential inclusion 
\beq \left\{
\begin{array}{lll}
\; \dot{x}=Ax +Bw +Gu,\\
&&\\
\;w \in -\mathcal{F}(Cx), \\
&&\\
\; y=Fx,
\end{array}\right. 
\eeq
and proposed the observer 
\beq \left\{
\begin{array}{lll}
\; \dot{\tilde{x}}=(A-LF)\tilde{x} +B\tilde{w} +Ly+Gu,\\
&&\\
\;\tilde{w} \in -\mathcal{F}\Big((C-KF)\tilde{x}+Ky\Big) \\
\end{array}\right. 
\eeq
where $A, B, C, G, F, K$ are matrices with appropriate dimensions, $\mathcal{F}$ is a maximal monotone set-valued operator and $y$ is the partial measurable output. 
Later, it was extended  by Huang et al in \cite{Huang} to incorporate uncertainty as follows 
\beq \left\{
\begin{array}{lll}
\; \dot{x}=Ax +Bw +f_1(x,u)+f_2(x,u)\theta,\\
&&\\
\;w \in -\mathcal{F}(Cx), \\
&&\\
\; y=Fx,
\end{array}\right. \label{syshu}
\eeq
where $\theta$ is an unknown constant. The proposed observer is
\beq \left\{
\begin{array}{lll}
\; \dot{\tilde{x}}=(A-LF)\tilde{x} +B\tilde{w} +f_1(\tilde{x},u)+f_2(\tilde{x},u)\tilde{\theta}+Ly,\\
&&\\
\;\tilde{w} \in -\mathcal{F}\Big((C-KF)\tilde{x}+Ky\Big) \\
&&\\
\dot{\tilde{\theta}}=h (\tilde{x},u)(y-F\tilde{x}),\\
\end{array}\right.  \label{obsyshu}
\eeq
where the range condition $f_2^T(x,u)P=h(x,u)F $ is satisfied for some symmetric positive definite matrix $P$.
The key idea in \cite{Huang} is to approximate the unknown by using an additional ODE, aiming to achieve observer convergence with an unknown rate.
{On the other hand}, B. K. Le  \cite{L3} proposed  a sliding mode observer  for a more general class of set-valued Lur'e systems,  which extends the nonlinear case considered by Xiang-Su-Chu \cite{Xiang}, see also \cite{Spurgeon} for a survey. 
Following this, Adly-Le applied the technique to system (\ref{syshu}) in \cite{al}, offering clear advantages over Luenberger-like methods (\ref{obsyshu}). These advantages include improved assumptions and achieving exponential convergence for the sliding mode observer without the need to solve additional ordinary differential equations. Moreover, the authors proposed a new, efficient smoothing approximation for the sliding mode technique to reduce the chattering effect using time guiding functions.
} 

{
Nonetheless, a significant limitation of existing results using Luenberger-like or sliding mode observers (we refer, for example, to \cite{al, Huang, L3}) is the necessity of the range condition for uncertainties, meaning that uncertainties must fall within the range of $P^{-1}F^T$. An alternative approach involves constructing $H^\infty$ observers. However, $H^\infty$ observers only offer estimations for the entire process under zero initial conditions, rather than providing estimations at specific time instances (see, for instance, \cite{Huang3}). This limitation prompts us to introduce a sliding mode observer to attain time-specific estimations for the Lur'e system in the presence of uncertainties that do not fall within the observation range, as follows:
\beq \left\{
\begin{array}{lll}
\; \dot{x}=Ax +B\omega +f(x,u)+\xi(t),\\
&&\\
\;\omega \in -\mathcal{F}(Cx), \\
&&\\
\; y=Fx,
\end{array}\right. \label{sysh}
\eeq
where $A\in \R^{n\times n}, B\in \R^{n\times m}, C\in \R^{m\times n}, F\in \R^{p\times n}$ are given matrix, $x\in \R^n$ is the state variable, $\mathcal{F}: \R^m\rightrightarrows  \R^m$ is a maximal monotone operator,  $u\in \R^r$ is the control input and $y\in \R^p$ is the measurable output. The nonlinear functions $f$ is Lipschitz continuous and   $\xi\in \R^l$ is unknown. The model in (\ref{sysh}) is quite general since in the right-hand side of the ODE, one has the linear  $Ax$, the nonlinear $f(x,u)$, the set-valued $B\mathcal{F}(Cx)$ and the general unknown $\xi$.
}
 {The idea is to decompose $\xi$ into 2 parts: $\xi=\xi_1+\xi_2$ where  $\xi_1=P^{-1}F^T\xi$ is the projection of $\xi$ onto the observation subspace ${\rm Im}(P^{-1}F^T)$ and $\xi_2$ is in the complement of the observation subspace. We show that if $\xi_2$ is bounded, we can obtain a useful estimation for the state of the original system. Interval estimation is  practical and unavoidable if  the uncertainty $\xi$ is general and not in the range of observation. We refer here some interval observers results for some specific systems \cite{Huang4,Mazenca}.  In addition, if $\xi_2$ tends to vanish when the time is large, we obtain the exact sliding mode observer, i.e., the observer state converges to the original state asymptotically.  Finally, we show that our proposed observer is  {a  $T$- observer,  a notion that we introduce to obtain a time instant estimation for the error. We also provide a mild additional condition such that our observer is  a strong $H^\infty$ observer, an extension  of $H^\infty$ observers which does not require the  zero initial condition. $T$- observers and strong $H^\infty$ observers allow us to obtain the total process and time instant estimations of the errors. 
In practice, there is a preference for the insights provided by the characteristics of strong $H^\infty$ observers over those of $H^\infty$ observers.
 }
 } 

{The paper is structured as follows: In Section 2, we revisit established definitions and essential results  for our subsequent analyses.
Section 3 presents our proposal for a sliding mode observer for the original system, which simultaneously functions as a $T$- observer as well as  a strong $H^\infty$ observer. In Section 4, we give several numerical examples to validate the theoretical findings.
The paper concludes in Section 5, where we offer our closing remarks and explore potential future directions.}
\section{{Notations} and mathematical background}
We denote the  scalar product and the corresponding norm of Euclidean spaces  by $\langle\cdot,\cdot\rangle$ and $\|\cdot\|$  respectively. 
A matrix $P\in \R^{n\times n}$  is called positive definite,  written  $P> 0$, if there exists $a>0$ such that 
$$
\langle Px,x \rangle \ge a\|x\|^2,\;\;\forall\;x\in {\R^n}.
$$
The Sign functions in $\R^n$ is defined by 
$$
{\rm Sign}(x)= \left\{
\begin{array}{l}
\frac{x}{\Vert x\Vert} \;\;\;\;{\rm if}\;\; \;\;x\neq 0\\ \\
\ball \;\;\;\;\;\;\; {\rm if}\;\;\;\;\;x=0,
\end{array}\right.
$$
where $\ball$ denotes the closed  unit ball in $\R^n$.

A set-valued mapping $\mathcal{F}: \R^m \rightrightarrows \R^m$ is called $\it{monotone}$ if for all $x,y\in \R^m$ and $x^*\in \mathcal{F}(x),y^*\in \mathcal{F}(y)$, one has 
$$\langle x^*-y^*,x-y\rangle \ge 0.$$ 
 Furthermore, $\mathcal{F}$  is called $\it{maximal \;monotone}$ if there is no monotone  operator $\mathcal{G}$ such that the graph of $\mathcal{F}$ is contained strictly in  the graph of $\mathcal{G}$ (see, e.g, \cite{Ac,br}).\\

\noindent  Let us recall a  general version of Gronwall's inequality \cite{Showalter}.
\begin{lemma}\label{gronwall}
Let $T>0$ be given and $a(\cdot),b(\cdot)\in L^1([0,T];\R)$ with $b(t)\ge 0$ for almost all $t\in [0,T].$ Let the absolutely continuous function $w: [0,T]\to \R_+$ satisfy:
\beq
(1-\alpha)w'(t)\le a(t)w(t)+b(t)w^\alpha(t),\;\; a. e. \;t\in [0,T],
\eeq
where $0\le \alpha<1$. Then for all $t\in [0,T]$:
\beq
w^{1-\alpha}(t)\le w^{1-\alpha}(0){\rm exp}\Big(\int_{0}^t a(\tau)d\tau\Big)+\int_{0}^t{\rm exp}\Big(\int_{s}^t a(\tau)d\tau\Big)b(s)ds.
\eeq
\end{lemma}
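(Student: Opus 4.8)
The plan is to linearize the differential inequality through the Bernoulli-type substitution $v \eqdef w^{1-\alpha}$, which turns the statement into the classical linear Gronwall inequality. Formally, on any subinterval where $w>0$ the chain rule gives $v'(t)=(1-\alpha)w^{-\alpha}(t)w'(t)$, so multiplying the hypothesis $(1-\alpha)w'(t)\le a(t)w(t)+b(t)w^\alpha(t)$ by the nonnegative factor $w^{-\alpha}(t)$ and using $w\cdot w^{-\alpha}=w^{1-\alpha}=v$ yields
\beq
v'(t)\le a(t)v(t)+b(t),\qquad \text{a.e. } t.
\eeq
Once this linear inequality is in hand, the conclusion is the standard integrating-factor computation described below.

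First I would dispose of the linear step, which is routine. Setting $\mu(t)\eqdef {\rm exp}\big(-\int_0^t a(\tau)d\tau\big)$, the function $t\mapsto \mu(t)v(t)$ is absolutely continuous (a product of absolutely continuous functions, $\mu$ being absolutely continuous since $a\in L^1$), and $\frac{d}{dt}\big(\mu(t)v(t)\big)=\mu(t)\big(v'(t)-a(t)v(t)\big)\le \mu(t)b(t)$ a.e. Integrating from $0$ to $t$ gives $\mu(t)v(t)\le v(0)+\int_0^t\mu(s)b(s)ds$; dividing by $\mu(t)>0$ and recognizing $\mu(t)^{-1}={\rm exp}\big(\int_0^t a\big)$ and $\mu(s)/\mu(t)={\rm exp}\big(\int_s^t a(\tau)d\tau\big)$ produces exactly the claimed bound for $v=w^{1-\alpha}$.

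The delicate point, and the main obstacle, is justifying the differential inequality for $v$ globally, since $w$ may vanish: the map $s\mapsto s^{1-\alpha}$ has unbounded derivative at $s=0$ when $\alpha>0$, so neither the absolute continuity of $v$ nor the division by $w^\alpha$ is automatic at zeros of $w$, and moreover the sign of $a$ is not controlled. I would resolve this by regularization: for $\epsilon>0$ set $v_\epsilon\eqdef(w+\epsilon)^{1-\alpha}$, which is absolutely continuous because $s\mapsto s^{1-\alpha}$ is Lipschitz on $[\epsilon,\infty)$. Multiplying the hypothesis by $(w+\epsilon)^{-\alpha}$ and using $w^\alpha(w+\epsilon)^{-\alpha}\le 1$ (so the $b$-term stays bounded by $b$, as $b\ge 0$) together with the identity $w(w+\epsilon)^{-\alpha}=v_\epsilon-\epsilon(w+\epsilon)^{-\alpha}$, which absorbs the troublesome term when $a<0$ at the cost of $|a|\epsilon^{1-\alpha}$, I would obtain
\beq
v_\epsilon'(t)\le a(t)v_\epsilon(t)+b(t)+|a(t)|\epsilon^{1-\alpha},\qquad \text{a.e. } t.
\eeq

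Finally, applying the linear estimate of the second paragraph to $v_\epsilon$ and letting $\epsilon\to 0$ completes the argument: $v_\epsilon(t)\to w^{1-\alpha}(t)$ and $v_\epsilon(0)\to w^{1-\alpha}(0)$ pointwise, while the perturbation contributes at most $\epsilon^{1-\alpha}\int_0^t{\rm exp}\big(\int_s^t a(\tau)d\tau\big)|a(s)|ds\to 0$. The only care needed in this passage to the limit is the interchange of limit and integral, which is justified by dominated convergence using $a,b\in L^1([0,T];\R)$ and the boundedness of the exponential kernel on the compact interval $[0,T]$. This yields the stated inequality for every $t\in[0,T]$.
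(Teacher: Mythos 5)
The paper does not actually prove this lemma: it is imported verbatim from the cited reference (Showalter's monograph) and used as a black box, so there is no in-paper argument to compare against. Judged on its own, your proof is correct and complete. The Bernoulli substitution $v=w^{1-\alpha}$ reducing the statement to the linear Gronwall inequality is the standard route, and you correctly identify the one genuine difficulty — at zeros of $w$ (for $\alpha>0$) the map $s\mapsto s^{1-\alpha}$ is not Lipschitz, so neither the absolute continuity of $v$ nor the division by $w^{\alpha}$ is justified there. Your regularization $v_\epsilon=(w+\epsilon)^{1-\alpha}$ handles this cleanly: the chain rule applies since $s\mapsto(s+\epsilon)^{1-\alpha}$ is $C^1$ with bounded derivative on the (compact) range of $w$; the estimate $w^{\alpha}(w+\epsilon)^{-\alpha}\le 1$ uses $b\ge 0$ exactly where it is needed; and the identity $w(w+\epsilon)^{-\alpha}=v_\epsilon-\epsilon(w+\epsilon)^{-\alpha}$ together with $(w+\epsilon)^{-\alpha}\le\epsilon^{-\alpha}$ correctly controls the sign-indefinite $a$-term by $|a(t)|\,\epsilon^{1-\alpha}$, giving $v_\epsilon'\le a v_\epsilon+b+|a|\epsilon^{1-\alpha}$ a.e. The integrating-factor step is routine ($\mu v_\epsilon$ is a product of absolutely continuous functions, $\mu>0$, and $a\in L^1$ keeps the kernel $\exp\bigl(\int_s^t a\bigr)$ bounded on $[0,T]$), and the passage $\epsilon\to 0$ is even simpler than you suggest: the only $\epsilon$-dependent integrand is $\epsilon^{1-\alpha}\exp\bigl(\int_s^t a\bigr)|a(s)|$, which is $\epsilon^{1-\alpha}$ times a fixed finite integral, so no dominated-convergence argument is really required. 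In short, your write-up supplies a self-contained proof of a statement the paper only cites.
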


\section{Sliding mode observer and the convergence analysis}
In this section, we  propose a  sliding mode observer for the system (\ref{sysh}) under the following assumptions:\\

\noindent \textbf{Assumption 1: } The set-valued operator $\mathcal{F}: \R^m\rightrightarrows \R^m$ is a  maximal monotone operator. \\

\noindent \textbf{Assumption 2: } The   functions $f:\R^n\times \R^r\to \R^n,\;{(x,u)\mapsto f(x,u)}$ is $L_f$ - Lipschitz continuous w.r.t $x$, i.e.:
$$
\Vert f(x_1,u)-f(x_2,u)  \Vert \le L _f\Vert x_1-x_2\Vert.
$$
\noindent \textbf{Assumption 3: } There exist $\epsilon>0$, $P\in \R^{n\times n}>0$, $L\in \R^{n\times p}$ and  $K\in \R^{m\times p}$  such that 
\baq\label{ine}
&&P(A-LF)+(A-LF)^TP+2L_f \Vert P \Vert I+2\epsilon  I \le 0,\\\label{pbc}
&& B^TP=C-KF.
\eaq
\noindent \textbf{Assumption 4: } The unknown $\xi$ is continuous and can be decomposed as $\xi(t)=P^{-1}F^T\xi_1(t)+\xi_2(t)$,  
where   $P^{-1}F^T\xi_1(t)$ is  the projection of $\xi$ onto ${\rm Im}(P^{-1}F^T)$. The terms $\xi_1$ and $\xi_2$ are unknown, bounded by known continuous positive functions $\kappa_1(t)$ and $\kappa_2(t)$ respectively, i.e., for all $t\ge 0$, we have 
$$
\vert \xi_1(t) \vert \le \kappa_1(t), \;\;\vert \xi_2(t) \vert \le \kappa_2(t).
$$
 \begin{remark}\normalfont
i) Assumption 4 is quite general, since it does not require the uncertainty in the range of $P^{-1}F^T$, i.e., the range of observation.  \\
ii)  Suppose that  Assumption 4 holds, then we have $FP^{-1}\xi_2=0$, i.e., $\xi_2$ is in the kernel of $FP^{-1}$.
 \end{remark}
\noindent The proposed  sliding mode observer for (\ref{sysh})  is 
\beq \left\{
\begin{array}{lll}
\; \dot{\tilde{x}}=A\tilde{x} +B\tilde{\omega} -Le_y+f_1(\tilde{x},u)-   P^{-1}F^T( \kappa_1 {\rm Sign}(e_y)+\frac{\kappa_3 e_y}{\Vert e_y\Vert^2+\delta }),\\
&&\\
\;\tilde{\omega} \in -\mathcal{F}(C\tilde{x}-Ke_y), \\
&&\\
\; \tilde{y}=F\tilde{x},
\end{array}\right.
\label{obs}
\eeq
where
\beq
e:=\tilde{x}-{x},\;\;  e_y: =Fe,
\eeq
for some given small $\delta>0$.
 Since the nonlinear $f$ and  the unknown $\xi$ are continuous, we can obtain the existence and uniqueness of the original system (\ref{sysh}) and the approximate observer (\ref{obs}) under some additional mild conditions (see, e.g., \cite{al,bh,Huang}). In the following, we  show that if $\xi_2$ is bounded, we obtain a useful estimation for the error $t\mapsto e(t)$. On the other hand,  if $\xi_2$ tends to vanish when the time is large, we  have an exact observer indeed. 
\begin{theorem}\label{mainth} Suppose that Assumptions 1--4 hold. Let $V(t)=\langle Pe(t),e(t)\rangle.$ Then 
\beq\label{totales}
\sqrt{V(t)}\le \sqrt{V(0)}{\rm exp}(\frac{-\epsilon t}{\lambda_{max}})+\frac{\Vert P\Vert }{\sqrt{\lambda_{min}}}\int_{0}^t{\rm exp}(\frac{\epsilon (s-t)}{\lambda_{max}}){\Vert \kappa_2(t)\Vert}ds,
\eeq
{where $\lambda_{max}$ and $\lambda_{min}$ are the largest and smallest eigenvalues of $P$ respectively.}
In addition\\

\begin{enumerate}
\item[{\rm (a)}] If $\kappa_2(\cdot)$ is bounded by some $k>0$, then 
\beq\label{estia}
\Vert e(t) \Vert \le\frac{\lambda_{max}\Vert P\Vert  k}{ {\lambda_{min}}\epsilon}+{\rm exp}(\frac{-\epsilon t}{\lambda_{max}})(\sqrt{\frac{V(0)}{\lambda_{min}}}-\frac{\lambda_{max}\Vert P\Vert  k}{\epsilon {\lambda_{min}}}).
\eeq
{Additionally, we introduce the attractive set $\Omega := [0, \frac{k \Vert P\Vert }{\epsilon}]$. Consequently, the error norm $\Vert e(t)\Vert$ converges to any neighborhood of $\Omega$ in finite time and remains within that vicinity.}\\

\item[{\rm (b)}]  If   $\kappa_2(t)\to 0$ as $t\to\infty$, then  $\Vert e(t) \Vert\to 0$ as $t\to\infty$, i.e.,  (\ref{obs}) is indeed an exact observer of (\ref{sysh}).\\
 
\item[{\rm (c)}] If $\Vert \kappa_2(t)\Vert\le k e^{-at}$ {for  all $t\ge 0$}, then $e(t)$ converges exponentially to zero.\\

\item[{\rm (d)}] If $\kappa_2\in L^2(0,\infty )$, then 
\baqn
\sqrt{V(t)}&\le& \sqrt{V(0)}{\rm exp}(\frac{-\epsilon t}{\lambda_{max}})+\frac{\Vert P\Vert  }{\sqrt{\lambda_{min}}}\sqrt{\int_0^t \kappa^2_2(s)ds}\sqrt{\frac{\lambda_{max}}{2\epsilon}(1-{\rm exp}(\frac{-2\epsilon t}{\lambda_{max}}))}\\
&\le&\sqrt{V(0)}{\rm exp}(\frac{-\epsilon t}{\lambda_{max}})+\frac{\Vert P\Vert \Vert \kappa_2\Vert_{L^2(0,\infty)}}{\sqrt{\lambda_{min}}}\sqrt{\frac{\lambda_{max}}{2\epsilon}(1-{\rm exp}(\frac{-2\epsilon t}{\lambda_{max}}))}.
\eaqn
\end{enumerate}
\end{theorem}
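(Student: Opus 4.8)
The plan is to run a single Lyapunov estimate on $V(t)=\langle Pe(t),e(t)\rangle$ and then feed the resulting scalar differential inequality into Lemma~\ref{gronwall}. First I would write down the error dynamics: subtracting (\ref{sysh}) from (\ref{obs}), using $e_y=Fe$ and the decomposition $\xi=P^{-1}F^T\xi_1+\xi_2$ from Assumption~4, the error $e=\tilde x-x$ satisfies, almost everywhere (solutions are only absolutely continuous because of the $\mathrm{Sign}$ multifunction and the maximal monotone $\mathcal{F}$),
\[
\dot e=(A-LF)e+B(\tilde\omega-\omega)+\big(f(\tilde x,u)-f(x,u)\big)-P^{-1}F^T\Big(\kappa_1\,\mathrm{Sign}(e_y)+\tfrac{\kappa_3 e_y}{\|e_y\|^2+\delta}\Big)-P^{-1}F^T\xi_1-\xi_2.
\]

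Then I would differentiate $V$. Since $P=P^T>0$, $\dot V=2\langle Pe,\dot e\rangle$ a.e., and I would bound the six resulting terms separately. (i) The linear term $2\langle Pe,(A-LF)e\rangle=\langle(P(A-LF)+(A-LF)^TP)e,e\rangle$ combined with the Lipschitz contribution $2\langle Pe,f(\tilde x,u)-f(x,u)\rangle\le 2L_f\|P\|\|e\|^2$ is controlled by the LMI (\ref{ine}) to yield $\le-2\epsilon\|e\|^2$. (ii) For the set-valued term, the matching condition (\ref{pbc}), $B^TP=C-KF$, gives $B^TPe=(C\tilde x-Ke_y)-Cx$, so monotonicity of $\mathcal{F}$ applied to the arguments $C\tilde x-Ke_y$ and $Cx$ forces $2\langle Pe,B(\tilde\omega-\omega)\rangle=2\langle B^TPe,\tilde\omega-\omega\rangle\le 0$. (iii) Using $\langle Pe,P^{-1}F^Tv\rangle=\langle e_y,v\rangle$, the sign feedback contributes $-2\kappa_1\langle e_y,\mathrm{Sign}(e_y)\rangle=-2\kappa_1\|e_y\|$, the smoothing term contributes $-2\kappa_3\|e_y\|^2/(\|e_y\|^2+\delta)\le0$, while the in-range part satisfies $-2\langle e_y,\xi_1\rangle\le 2\|e_y\|\,|\xi_1|\le 2\kappa_1\|e_y\|$; the two $\kappa_1$ contributions cancel exactly, leaving $\le0$. (iv) Finally $-2\langle Pe,\xi_2\rangle\le 2\|P\|\|e\|\,\kappa_2$. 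Collecting everything gives $\dot V\le-2\epsilon\|e\|^2+2\|P\|\kappa_2\|e\|$.

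Next I would pass to $\sqrt V$. Using $\lambda_{min}\|e\|^2\le V\le\lambda_{max}\|e\|^2$, I substitute $\|e\|^2\ge V/\lambda_{max}$ and $\|e\|\le\sqrt{V/\lambda_{min}}$ and divide by $2\sqrt V$ to obtain the linear inequality $\frac{d}{dt}\sqrt V\le-\frac{\epsilon}{\lambda_{max}}\sqrt V+\frac{\|P\|}{\sqrt{\lambda_{min}}}\kappa_2$. Applying Lemma~\ref{gronwall} with $\alpha=0$, $a(t)\equiv-\epsilon/\lambda_{max}$ and $b(t)=\|P\|\kappa_2(t)/\sqrt{\lambda_{min}}$ yields exactly (\ref{totales}). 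Parts (a)--(d) then follow by estimating the convolution integral $\int_0^t\exp(\tfrac{\epsilon(s-t)}{\lambda_{max}})\kappa_2(s)\,ds$: a uniform bound $k$ integrates to $\frac{\lambda_{max}}{\epsilon}(1-e^{-\epsilon t/\lambda_{max}})$ and, with $\|e\|\le\sqrt{V/\lambda_{min}}$, gives (\ref{estia}), the ultimate bound being the attractive set (obtainable alternatively by observing $\dot V<0$ whenever $\|e\|>k\|P\|/\epsilon$); decay $\kappa_2\to0$ forces the convolution to $0$ by the standard split-at-$T_0$ argument for (b); an exponential bound $ke^{-at}$ integrates explicitly into a sum of decaying exponentials for (c); and Cauchy--Schwarz on the convolution produces (d).

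The main obstacle is the rigorous treatment of the nonsmooth, set-valued structure underlying steps (ii)--(iii): I must justify that $V$ is absolutely continuous with $\dot V=2\langle Pe,\dot e\rangle$ a.e.\ despite the $\mathrm{Sign}$ multifunction and the maximal monotone $\mathcal{F}$, and verify that the sign feedback term absorbs the in-range uncertainty $P^{-1}F^T\xi_1$ precisely (the cancellation at $e_y=0$ being handled by noting that both competing terms vanish there). Everything else is a routine Lyapunov/Gronwall computation; the conceptual content lies entirely in the combined use of the LMI (\ref{ine}), the matching condition (\ref{pbc}), and the sign feedback to eliminate every contribution except the out-of-range part $\xi_2$.
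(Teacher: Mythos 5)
Your proposal is correct and follows essentially the same route as the paper's proof: the identical term-by-term Lyapunov estimate (the LMI (\ref{ine}) absorbing the linear and Lipschitz parts, monotonicity of $\mathcal{F}$ via $B^TP=C-KF$, the $\kappa_1$ sign term cancelling the in-range part $\xi_1$, leaving only the $\xi_2$ contribution bounded by $\Vert P\Vert\kappa_2\Vert e\Vert$), followed by Gronwall's inequality and the same four case analyses, including the attractive-set argument for (a), the restart/split argument for (b), explicit integration for (c), and Cauchy--Schwarz for (d). The only cosmetic difference is that the paper applies Lemma~\ref{gronwall} with $\alpha=1/2$ directly to $\frac{1}{2}\frac{dV}{dt}\le-\frac{\epsilon}{\lambda_{max}}V+\frac{\Vert P\Vert\kappa_2}{\sqrt{\lambda_{min}}}\sqrt{V}$, which sidesteps the division by $2\sqrt{V}$ (and its degeneracy where $V=0$) that your $\alpha=0$ variant would need to justify.
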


\begin{proof}
From (\ref{sysh}) and (\ref{obs}), we have 
\baq\nonumber
\dot{e}&\in&(A-LF)e-B(\tilde{\omega}-{\omega})+f(\tilde{x},u)-f({x},u)\\
&-&  P^{-1}F^T( \kappa_1 {\rm Sign}(e_y)+\frac{\kappa_3 e_y}{\Vert e_y\Vert^2 +\delta})-(P^{-1}F^T\xi_1(t)+\xi_2(t)). \label{der}
\eaq
With $V(t)=\langle Pe(t),e(t)\rangle$, one has 
\baq\nonumber
\frac{1}{2}\frac{dV}{dt}&=&\langle P\dot{e}, e\rangle=\langle P(A-LF)e+PB(\tilde{\omega}-{\omega}), e\rangle\\\nonumber
&+&\langle P(f_1(\tilde{x},u)-f_1({x},u)), e \rangle -(\kappa_1(t)-\Vert\xi_1(t)\Vert){\Vert e_y\Vert} - \langle P\xi_2(t),e\rangle\\
&-&\frac{\kappa_3 \Vert e_y \Vert^2}{\Vert e_y\Vert^2+\delta}.
\label{dv}
\eaq
From (\ref{pbc}) and the monotonicity of $ \mathcal{F}$, we have 
\baq
\langle PB(\tilde{\omega}-{\omega}), e\rangle=\langle \tilde{\omega}-{\omega}), B^TP e\rangle
=\langle \tilde{\omega}-{\omega}, (C-KF) e\rangle\le 0. 
\label{mn}
\eaq
On the other hand, using Assumption 2, we obtain 
\baq\label{Lips}
\langle P(f_1(\tilde{x},u)-f_1({x},u)), e \rangle\le L_1 \Vert P \Vert \Vert e\Vert^2 \label{et1}.
\eaq
Thus from (\ref{ine}), (\ref{der})-(\ref{Lips}), we deduce that 
\beq\label{bounded}
\frac{1}{2}\frac{dV}{dt}\le-\epsilon \Vert e\Vert ^2  +\Vert P\Vert \Vert \kappa_2(t)\Vert \Vert e(t)\Vert\le -\frac{\epsilon}{\lambda_{max}}V+\frac{\Vert P\Vert \Vert \kappa_2(t)\Vert}{\sqrt{\lambda_{min}}}\sqrt{V}.
\eeq
Using Lemma 1 with $\alpha=1/2$, we have
$$
\sqrt{V(t)}\le \sqrt{V(0)}{\rm exp}(\frac{-\epsilon t}{\lambda_{max}})+\frac{\Vert P\Vert }{\sqrt{\lambda_{min}}}\int_{0}^t{\rm exp}(\frac{\epsilon (s-t)}{\lambda_{max}}){\Vert \kappa_2(s)\Vert}ds.
$$
(a) If $\kappa_2(\cdot)$ is bounded by some $k>0$, then 
\baqn
\sqrt{\lambda_{min}}\Vert e(t) \Vert \le\sqrt{V(t)}&\le& \sqrt{V(0)}{\rm exp}(\frac{-\epsilon t}{\lambda_{max}})+\frac{\Vert P\Vert  k}{ \sqrt{\lambda_{min}}}\int_0^t {\rm exp}(\frac{\epsilon (s-t)}{\lambda_{max}})ds.\\
&=&\sqrt{V(0)}{\rm exp}(\frac{-\epsilon t}{\lambda_{max}})+\frac{\lambda_{max}\Vert P\Vert  k}{\epsilon \sqrt{\lambda_{min}}}\big(1-{\rm exp}(\frac{-\epsilon t}{\lambda_{max}})\big)\\
&=&\frac{\lambda_{max}\Vert P\Vert  k}{\epsilon \sqrt{\lambda_{min}}}+{\rm exp}(\frac{-\epsilon t}{\lambda_{max}})(\sqrt{V(0)}-\frac{\lambda_{max}\Vert P\Vert  k}{\epsilon \sqrt{\lambda_{min}}}).
\eaqn
Thus, one obtains $(\ref{estia})$. On the other hand, from (\ref{bounded}) we have 
$$
\frac{dV}{dt}\le-\epsilon \Vert e\Vert ^2  +k \Vert P\Vert  \Vert e\Vert.
$$
If $\Vert e\Vert\ge \frac{k \Vert P\Vert }{\epsilon}+\rho$ for some $\rho>0$, one has $\frac{dV}{dt}\le-\rho \epsilon \Vert e\Vert $. Hence classically $\Vert e\Vert$ converges to any neighborhood of $\Omega$ in finite time and stay there.  \\

\noindent (b) In particular, $\kappa_2(\cdot)$ is bounded by some $k>0$. Note that from $(\ref{estia})$, we imply that
\beq\label{estia1}
\Vert e(t) \Vert \le\frac{\lambda_{max}\Vert P\Vert  k}{ {\lambda_{min}}\epsilon}+{\rm exp}(\frac{-\epsilon t}{\lambda_{max}})\sqrt{\frac{V(0)}{\lambda_{min}}},\;\;\forall t\ge 0.
\eeq
In particular, $e(t)$ is bounded and hence   $V(e(t))$ is bounded by some $V^*$. {Let $T>0$  be given, $T_n=nT$ and 
$$c_n:=\sup_{t\in [T_n, +\infty)}\vert \kappa_2(t)\vert, \; n\ge 1.$$
 Then $(c_n)$ is bounded by $k$ and converges to zero since $\kappa_2(t)\to 0$ as $t\to \infty$.} Using (\ref{estia1}), by replacing $0$ by $T_n$, for all $t\ge T_n$, we obtain
\beq
\Vert e(t) \Vert \le\frac{\lambda_{max}\Vert P\Vert  c_n}{ {\lambda_{min}}\epsilon}+{\rm exp}(\frac{-\epsilon (t-T_n)}{\lambda_{max}})\sqrt{\frac{V^*}{\lambda_{min}}}.
\eeq
For given $\delta>0$, we can choose $n_0$ such that $\frac{\lambda_{max}\Vert P\Vert  c_{n_0}}{ {\lambda_{min}}\epsilon}\le \delta$. Then 
\beq
\Vert e(t) \Vert \le\delta+{\rm exp}(\frac{-\epsilon (t-T_{n_0})}{\lambda_{max}})\sqrt{\frac{V^*}{\lambda_{min}}}, \;\;\forall \;t\ge T_{n_0}.
\eeq
Thus
$$
\limsup_{t\to\infty}\Vert e(t) \Vert \le\delta.
$$
Since $\delta$ is arbitrary, we must have 
$$
\limsup_{t\to\infty}\Vert e(t) \Vert \le0,
$$
and thus
$$
\lim_{t\to\infty}\Vert e(t) \Vert =0.
$$
\noindent (c) If $\Vert \kappa_2(t)\Vert\le k e^{-at}$ {for  all $t\ge 0$}, then
\baq
\sqrt{V(t)}&\le& \sqrt{V(0)}{\rm exp}(\frac{-\epsilon t}{\lambda_{max}})+\frac{\Vert P\Vert  k}{ \sqrt{\lambda_{min}}}\int_{0}^t{\rm exp}(\frac{\epsilon (s-t)}{\lambda_{max}}-as)ds\\
&=&\sqrt{V(0)}{\rm exp}(\frac{-\epsilon t}{\lambda_{max}})+\frac{\lambda_{max}\Vert P\Vert  k}{ \sqrt{\lambda_{min}}(\epsilon-a\lambda_{max})}(e^{-at}-{\rm exp}(\frac{-\epsilon t}{\lambda_{max}})),
\eaq
if $\epsilon\neq a\lambda_{max}$.
Thus the error $e(t)$ converges to zero with exponential rate. The case $\epsilon= a\lambda_{max}$ is trivial. \\

\noindent (d) If $k_2\in L^2(0,\infty )$, then from (\ref{totales}), one has 
\baq\nonumber
\sqrt{V(t)}&\le& \sqrt{V(0)}{\rm exp}(\frac{-\epsilon t}{\lambda_{max}})+\frac{\Vert P\Vert }{\sqrt{\lambda_{min}}}\sqrt{\int_0^t \kappa^2_2(s)ds}\sqrt{\int_{0}^t{\rm exp}(\frac{2\epsilon (s-t)}{\lambda_{max}})ds}\\\nonumber
&=& \sqrt{V(0)}{\rm exp}(\frac{-\epsilon t}{\lambda_{max}})+\frac{\Vert P\Vert  }{\sqrt{\lambda_{min}}}\sqrt{\int_0^t \kappa^2_2(s)ds}\sqrt{\frac{\lambda_{max}}{2\epsilon}(1-{\rm exp}(\frac{-2\epsilon t}{\lambda_{max}}))}\\
&\le &\sqrt{V(0)}{\rm exp}(\frac{-\epsilon t}{\lambda_{max}})+\frac{\Vert P\Vert \Vert \kappa_2\Vert_{L^2(0,\infty)}}{\sqrt{\lambda_{min}}}\sqrt{\frac{\lambda_{max}}{2\epsilon}(1-{\rm exp}(\frac{-2\epsilon t}{\lambda_{max}}))}.
\eaq
{The proof is thereby completed.}
\end{proof}
 \begin{remark}\normalfont
 i) In the first case, we have the interval estimation for the error, i.e., the original state $x(t)$ belongs to some known interval centered at the approximate observer $\tilde{x}(t)$.  In the remaining cases, one obtains  the exact observer. It covers the case where the unknown $\xi$ vanishes after some time instant $T_0>0$ as considered in  some examples in \cite{Huang3}. Note that with our observer, we only require that the unknown part $\xi_2$ in the complement of the observation space tends to vanish (see Figure 3, Example 2).  \\
\noindent ii) The term $\frac{\kappa_3 e_y}{\Vert e_y\Vert^2+\delta }$ makes $e_y$ small  very fast  and remains small.
Note that if $\Vert e_y\Vert^2\ge \delta$, then $\frac{ e^2_y}{\Vert e_y\Vert^2+\delta}\ge \frac{1}{2}$. Thus, if we choose $\kappa_3(t) >\frac{(\Vert P\Vert \Vert \kappa_2(t)\Vert +\rho)^2}{2\epsilon} $ for some $\rho>0$ and $\Vert e_y\Vert^2\ge \delta$,  then 
\baq\nonumber
\frac{1}{2}\frac{dV}{dt}&=&\langle P\dot{e}, e\rangle=\langle P(A-LF)e+PB(\tilde{\omega}-{\omega}), e\rangle+\langle P(f_1(\tilde{x},u)-f_1({x},u)), e \rangle \\\nonumber
& -&(\kappa_1(t)-\Vert\xi_1(t)\Vert){\Vert e_y\Vert} -\frac{ \kappa_3(t) e^2_y}{\Vert e_y\Vert^2+\delta}- \langle P\xi_2(t),e\rangle\\
&\le& -\epsilon \Vert e \Vert^2 - \frac{\kappa_3(t)}{2}+ \Vert P\Vert \Vert\kappa_2(t)\Vert \Vert e \Vert \le -\rho \Vert e \Vert.
\eaq
Then $\Vert e(t) \Vert$ decreases very fast in finite time such that $\Vert e_y(t) \Vert^2<\delta$.\\
iii) Suppose that $Pk_2(\cdot)$ is bounded by $k'$, then we can improve the  set $\Omega$ by the new attractive set $\Omega'=[0,\frac{k'}{\epsilon}]$.\\
iv) The same result is obtained if the unknown $\xi(t)$ is replaced by $\xi(t,x,u)$.
\end{remark}
Let us recall the definition of  $H^\infty$ observer (see, e.g., \cite{Huang3}) and {introduce the notion of strong $H^\infty$ observer as well as $T$-observer.}
\begin{definition}
 If the error $t\mapsto e(t)$  is asymptotically stable for $\xi\equiv 0$ and $\Vert e \Vert_{L^2(0,\infty)}\le  \mu \Vert \xi \Vert_{L^2(0,\infty)}$ for some $\mu>0$ under zero initial condition (i.e., $e(0)=0$) for non-zero $\xi \in L^2(0,\infty)$, then (\ref{obs}) is said {an $H^\infty$} observer for the system  (\ref{sysh}).
\end{definition}

{\begin{definition}
The observer (\ref{obs}) is called {a } strong $H^\infty$ observer for the system  (\ref{sysh}) if the error $t\mapsto e(t)$  is asymptotically stable for $\xi\equiv 0$ and for non-zero $\xi \in L^2(0,\infty)$, there exists $\mu>0$ such that  
\beq
\Vert e \Vert_{L^2(0,\infty)}\le  C+\mu \Vert \xi \Vert_{L^2(0,\infty)},
\eeq
where $C$ is a non-negative constant that vanishes under zero initial condition.
\end{definition}}
{\begin{definition}
The observer (\ref{obs}) is called {a } $T$- observer for the system  (\ref{sysh}) if the error $t\mapsto e(t)$  is asymptotically stable for $\xi\equiv 0$ and for non-zero $\xi \in L^2(0,\infty)$, there exists $\mu>0$ such that  
\beq
\Vert e(t) \Vert \le \gamma(t)+\mu\sqrt{\int_0^t \xi^2(s)ds}, \;\;\forall \;t\ge 0,
\eeq
where $\gamma(t)$ vanishes under zero initial condition and converges exponentially to zero for the remaining case.
\end{definition}
\begin{remark}\normalfont
It is easy to see that a strong $H^\infty$ observer is also an  $H^\infty$ observer while $T$- observer provides a time instant estimation for the error. 
\end{remark}}
\noindent \textbf{Assumption 5:} There exists some $\mu>0$  such that 
\baq\label{block}
\left( \begin{array}{ccc}
\Omega &\;\; -P \\ \\
-P &\;\; -2\mu \epsilon I
\end{array} \right)\le 0
\eaq
where $\Omega:=P(A-LF)+(A-LF)^TP+2L_f \Vert P \Vert I+2 \epsilon I$ and $\epsilon>0$ is in Assumption 4.
{\begin{theorem}
Suppose that Assumptions 1--4 hold. Then (\ref{obs}) is  a $T$ - observer for the system  (\ref{sysh}). In addition, if Assumption 5 holds, then (\ref{obs}) is  {a strong $H^\infty$} observer for the system  (\ref{sysh}).
\end{theorem}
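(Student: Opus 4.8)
The plan is to reuse the Lyapunov machinery developed in the proof of Theorem~\ref{mainth}, but to keep the \emph{full} uncertainty $\xi$ intact (rather than cancelling its in‑range part $\xi_1$ against the Sign‑term, as is done for the exact‑observer conclusions) so that the resulting bounds are expressed directly in terms of $\Vert\xi\Vert$, as both definitions demand. Differentiating $V(t)=\langle Pe(t),e(t)\rangle$ along the error dynamics (\ref{der}) and invoking the monotonicity estimate (\ref{mn}) together with the Lipschitz bound (\ref{Lips}), I would observe that the two correction terms $-\kappa_1\langle {\rm Sign}(e_y),e_y\rangle$ and $-\kappa_3\Vert e_y\Vert^2/(\Vert e_y\Vert^2+\delta)$ are both non‑positive, so they may simply be discarded instead of being used to absorb $\xi_1$. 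With $\Omega=P(A-LF)+(A-LF)^TP+2L_f\Vert P\Vert I+2\epsilon I$ as in Assumption~5, this yields the master inequality
\[
\tfrac12\tfrac{dV}{dt}\le \tfrac12\langle\Omega e,e\rangle-\epsilon\Vert e\Vert^2-\langle P\xi,e\rangle,
\]
which serves as the common starting point for both assertions.

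For the $T$-observer, I would apply (\ref{ine}) of Assumption~3, i.e.\ $\Omega\le 0$, so that the master inequality collapses to $\tfrac12\tfrac{dV}{dt}\le-\epsilon\Vert e\Vert^2+\Vert P\Vert\Vert\xi\Vert\Vert e\Vert\le-\tfrac{\epsilon}{\lambda_{max}}V+\tfrac{\Vert P\Vert\Vert\xi\Vert}{\sqrt{\lambda_{min}}}\sqrt V$, exactly as in (\ref{bounded}) with $\Vert\xi\Vert$ in place of $\Vert\kappa_2\Vert$. Lemma~\ref{gronwall} with $\alpha=1/2$ gives the same convolution bound on $\sqrt{V(t)}$ as (\ref{totales}), and the Cauchy--Schwarz step from part~(d) turns it into $\sqrt{V(t)}\le\sqrt{V(0)}\,{\rm exp}(-\epsilon t/\lambda_{max})+\tfrac{\Vert P\Vert}{\sqrt{\lambda_{min}}}\sqrt{\tfrac{\lambda_{max}}{2\epsilon}}\sqrt{\int_0^t\xi^2(s)ds}$. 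Dividing by $\sqrt{\lambda_{min}}$ produces the required form $\Vert e(t)\Vert\le\gamma(t)+\mu\sqrt{\int_0^t\xi^2(s)ds}$ with $\gamma(t)=\sqrt{V(0)/\lambda_{min}}\,{\rm exp}(-\epsilon t/\lambda_{max})$ and $\mu=\tfrac{\Vert P\Vert}{\lambda_{min}}\sqrt{\tfrac{\lambda_{max}}{2\epsilon}}$; here $\gamma$ vanishes when $e(0)=0$ and decays exponentially otherwise. Asymptotic (indeed exponential) stability for $\xi\equiv0$ is read off the same master inequality, since then $\tfrac{dV}{dt}\le-\tfrac{2\epsilon}{\lambda_{max}}V$.

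For the strong $H^\infty$ observer I would instead read Assumption~5 as the quadratic‑form inequality $\langle\Omega e,e\rangle-2\langle Pe,\xi\rangle-2\mu\epsilon\Vert\xi\Vert^2\le0$, evaluated at the pair $(e(t),\xi(t))$, which (using $\langle Pe,\xi\rangle=\langle P\xi,e\rangle$ by symmetry of $P$) rearranges to $\tfrac12\langle\Omega e,e\rangle-\langle P\xi,e\rangle\le\mu\epsilon\Vert\xi\Vert^2$. Substituting into the master inequality gives $\tfrac{dV}{dt}\le-2\epsilon\Vert e\Vert^2+2\mu\epsilon\Vert\xi\Vert^2$; integrating on $[0,T]$ and using $V(T)\ge0$ yields $\int_0^T\Vert e\Vert^2\,dt\le\tfrac{V(0)}{2\epsilon}+\mu\Vert\xi\Vert_{L^2(0,\infty)}^2$, and letting $T\to\infty$ and taking square roots (via $\sqrt{a+b}\le\sqrt a+\sqrt b$) produces $\Vert e\Vert_{L^2(0,\infty)}\le\sqrt{V(0)/(2\epsilon)}+\sqrt\mu\,\Vert\xi\Vert_{L^2(0,\infty)}$, i.e.\ the strong $H^\infty$ bound with $C=\sqrt{V(0)/(2\epsilon)}$ vanishing under zero initial condition.

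The Gronwall and Cauchy--Schwarz estimates are routine, being copied almost verbatim from Theorem~\ref{mainth}(d). The one step requiring genuine care is the reduction to the master inequality: I must justify that retaining the full $-\langle P\xi,e\rangle$ (rather than the $\xi_2$-only term produced by the Sign‑cancellation used for the exact‑observer claims) is legitimate, which rests precisely on the non‑positivity of the Sign‑ and $\kappa_3$‑terms, and then correctly match the block‑matrix form of Assumption~5 to the scalar estimate $\tfrac12\langle\Omega e,e\rangle-\langle P\xi,e\rangle\le\mu\epsilon\Vert\xi\Vert^2$. Once that bookkeeping is in place, both conclusions follow mechanically.
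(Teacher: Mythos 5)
Your proposal is correct and follows essentially the same route as the paper: the same Lyapunov function $V=\langle Pe,e\rangle$, the same differential inequality obtained from monotonicity (\ref{mn}) and the Lipschitz bound (\ref{Lips}), Lemma~\ref{gronwall} with $\alpha=1/2$ plus Cauchy--Schwarz for the $T$-observer estimate, and Assumption~5 read as a quadratic form in $(e,\xi)$ followed by integration for the strong $H^\infty$ bound. The only (harmless) deviations are that the paper gets the $T$-observer bound by invoking Theorem~\ref{mainth}(d) with $\xi_2$ and then using $\int_0^t\xi_2^2\le\int_0^t\xi^2$ (orthogonality of the decomposition), where you rederive it keeping the full $\xi$ and discarding the non-positive Sign- and $\kappa_3$-terms, and that your explicit square-root step $\sqrt{a+b}\le\sqrt{a}+\sqrt{b}$, yielding the constant $\sqrt{\mu}$ and $C=\sqrt{V(0)/(2\epsilon)}$, is in fact slightly more careful than the paper's final line, which conflates $\Vert e\Vert_{L^2(0,\infty)}^2$ with $\Vert e\Vert_{L^2(0,\infty)}$.
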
}
\begin{proof}
From Theorem \ref{mainth}, if the uncertainty $\xi \equiv 0$, then the error $t\mapsto e(t)$ converges to zero exponentially. For non-zero $\xi \in L^2(0,\infty)$, using Theorem \ref{mainth}-d, we have 
\baq\nonumber
\lambda_{min} \Vert e(t) \Vert &\le& \sqrt{V(t)}\le  \sqrt{V(0)}{\rm exp}(\frac{-\epsilon t}{\lambda_{max}})+\frac{\Vert P\Vert  }{\sqrt{\lambda_{min}}}\sqrt{\int_0^t \xi^2_2(s)ds}\sqrt{\frac{\lambda_{max}}{2\epsilon}(1-{\rm exp}(\frac{-2\epsilon t}{\lambda_{max}}))}\\
&\le &\sqrt{V(0)}{\rm exp}(\frac{-\epsilon t}{\lambda_{max}})+\frac{\Vert P\Vert  }{\sqrt{\lambda_{min}}}\sqrt{\frac{\lambda_{max}}{2\epsilon}}\sqrt{\int_0^t \xi^2(s)ds},
\eaq
which deduces that (\ref{obs}) is  a $T$ - observer for the system  (\ref{sysh}). If Assumption 5 is satisfied, from (\ref{dv})-(\ref{Lips}), with the same $V(t)=\langle Pe(t),e(t)\rangle$, we have
\baq\nonumber
\frac{1}{2}\frac{dV}{dt}&=&\langle P\dot{e}, e\rangle\\\nonumber
&=&\langle P(A-LF)e+PB(\tilde{\omega}-{\omega}), e\rangle\\\nonumber
&+&\langle P(f_1(\tilde{x},u)-f_1({x},u)), e \rangle -\kappa_1(t){\Vert e_y\Vert} -\frac{\kappa_3 \Vert e_y \Vert^2}{\Vert e_y\Vert^2+\delta}- \langle P\xi,e\rangle\\\nonumber
&\le&\langle P(A-LF)e, e\rangle+L_f \Vert P \Vert \Vert e \Vert^2- \langle P\xi,e\rangle\\
&\le& \epsilon ( -\Vert e \Vert^2+ \mu \Vert \xi \Vert^2).
\eaq
  Integrating both side from $0$ to $\infty$ and note that $V$ is non-negative, we obtain that $\Vert e \Vert_{L^2(0,\infty)}\le \frac{V(0)}{2 \epsilon}+ \mu \Vert \xi \Vert_{L^2(0,\infty)}$. It means that (\ref{obs}) is  {a strong $H^\infty$} observer for the system  (\ref{sysh}).
\end{proof}
\begin{remark}\normalfont
i) Assumption 5 implies that $\Omega=P(A-LF)+(A-LF)^TP+2L_f \Vert P \Vert I+2 \epsilon I\le 0$, which is consistent to Assumption 4. In $\Omega$, we use the term $\epsilon I$ instead of $I$ as in \cite{Huang3}, which is easier to have the non-positiveness of $\Omega$.\\
ii) The positive number $\mu$  exists, for example, if $\Omega \le -2\epsilon I $, we can choose $\mu > \frac{\Vert P \Vert^2}{4\epsilon^2}$.
\end{remark}
\section{Numerical examples}
 \begin{example}\normalfont
  First we consider the system (\ref{sysh}) with 
  $$
  A=\left( \begin{array}{ccc}
-6 &\;\; 4&\;\; 0 \\ \\
7 &\;\; -8 &\;\;  0 \\ \\
0&\;\;0 &\;\;-7
\end{array} \right), B=\left( \begin{array}{ccc}
4  \\ \\
6  \\ \\
-3
\end{array} \right), f(x,u)=\left( \begin{array}{ccc}
u+ 2\sin x_2  \\ \\
2u+ 3 \cos x_1 \\ \\
-u+4\sin x_3
\end{array} \right)
  $$
  
  $$
   C=\left( \begin{array}{ccc}
8 &\;\; 6&\;\; -3
\end{array} \right), F = \left( \begin{array}{ccc}
1 &\;\; 0&\;\; 0
\end{array} \right), \;\; u=5\sin t.
  $$
 Suppose that the unknown  $\xi(t,x,u)=\left( \begin{array}{ccc}
2 \\ \\
5\cos x_1 \\ \\
4 \sin t
\end{array} \right)$ and 
 $$
 \mathcal{F}(x) = \left\{
\begin{array}{lll}
{\rm sign}(x)(3\vert x \vert +6) & \mbox{ if } & x \neq 0,\\
&&\\
\; [ -6,6 ] & \mbox{ if } & x = 0.
\end{array}\right.
$$
Then $L=4$ and $k=\sqrt{41}$. Then Assumptions 1--4 are satisfied with 
  $$
 P=\left( \begin{array}{ccc}
1 &\;\; 0&\;\; 0 \\ \\
0 &\;\; 1 &\;\;  0 \\ \\
0&\;\;0 &\;\;1
\end{array} \right),\;\;\; L=\left( \begin{array}{ccc}
0  \\ \\
11 \\ \\
0
\end{array} \right), \;\;\;\epsilon = 2, \;\;\; K=4.
  $$
One can {observe} that the total error $\Vert e\Vert$ converges  to any neighborhood of the set $\Omega =[0,\sqrt{41}/2]$ in finite time  while the observed error $e_1$ tends to zero very fast. Consequently, the observer state $\tilde{x}$  using  (\ref{obs}) provides a good estimation for the original state $x$. 
   \begin{figure}[h!]
\begin{center}
\includegraphics[scale=0.56]{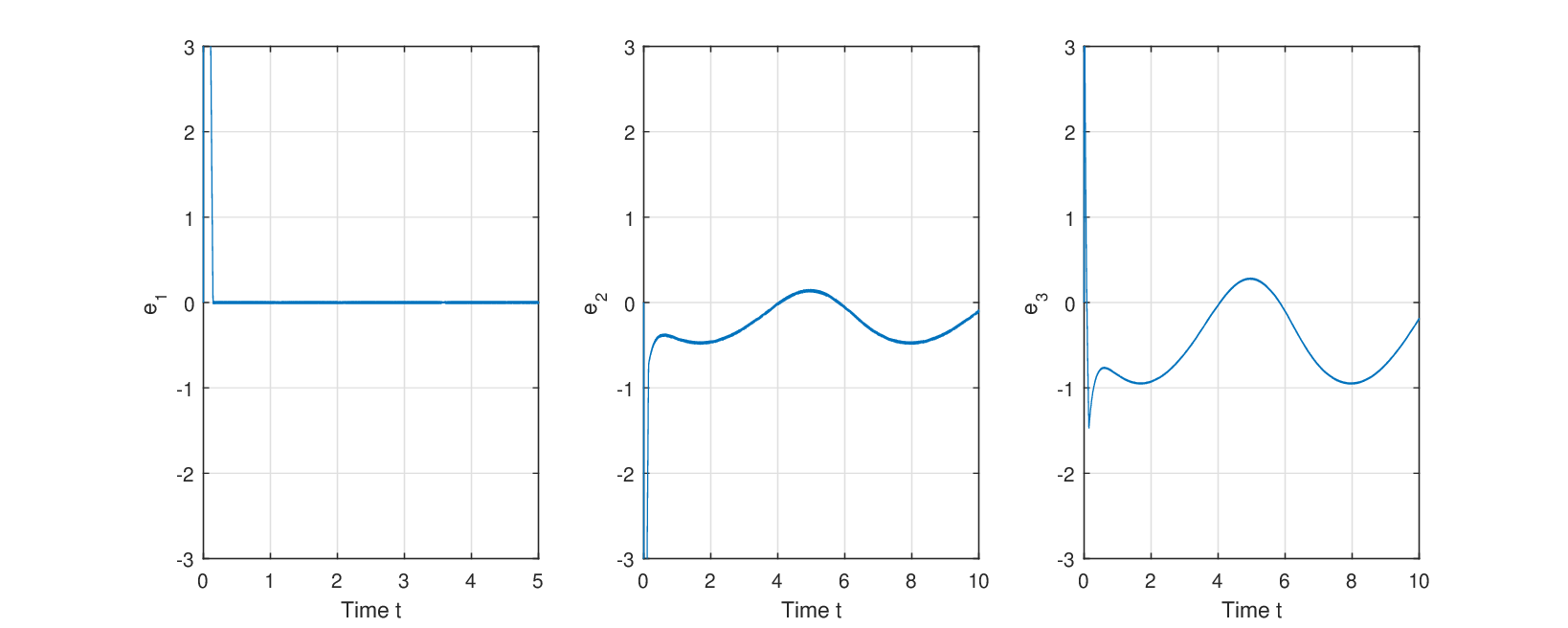}
\caption{Errors using  the approximate sliding mode observer (\ref{obs})} 
\end{center}
\end{figure}
 \end{example}
 
  \begin{example}\normalfont
  Next let us consider the  rotor system with friction as in \cite{Bruin,Juloski}.
Let $x_{1}=\theta_{u}-\theta_{l}$, $x_{2}=\dot{\theta}_{u}$ and $x_{3}=\dot{\theta}_{l}$ where $\theta_{u}$ and $\theta_{l}$ are the angular positions of the upper and lower discs, respectively, we have
\begin{equation}\label{equ:H2}
\begin{split}
\dot{x}_{1}&=x_{2}-x_{3}\\
\dot{x}_{2}&=\frac{k_{m}}{J_{u}}u-\frac{k_{\theta}}{J_{u}}x_{1}-\frac{1}{J_{u}}T_{fu}(x_{2})\\
\dot{x}_{3}&=\frac{k_{\theta}}{J_{l}}x_{1}-\frac{1}{J_{l}}T_{fl}(x_{3}),
\end{split}
\end{equation}
where $u$ is the input voltage to the power amplifier of the motor, 
$T_{fu}(x_{2})=b_{up}x_{2}$
and
\begin{equation}\label{equ:H4}
T_{fl}(x_{3})=\left\{\begin{array}{lr}
$[$T_{sl}+T_{1}(1-\frac{2}{1+e^{\omega_{1}|x_{3}|}})\\
+T_{2}(1-\frac{2}{1+e^{\omega_{1}|x_{3}|}})$]${\rm sign}(x_{3})+b_{l}x_{3}, &x_{3}\neq0\\
$[$-T_{sl},~T_{sl}$]$, &x_{3}=0.
\end{array}\right.
\end{equation}
Then \eqref{equ:H2} can be written as
\begin{equation}\label{equ:H5}
\begin{split}
\dot{x}_{1}&=x_{2}-x_{3}\\
\dot{x}_{2}&=\frac{k_{m}}{J_{u}}u-\frac{k_{\theta}}{J_{u}}x_{1}-\frac{b_{up}}{J_{u}}x_{2}\\
\dot{x}_{3}&=\frac{k_{\theta}}{J_{l}}x_{1}-\frac{1}{J_{l}}T_{fl}(x_{3}).
\end{split}
\end{equation}
The estimation of the parameters are given in  \cite[Table 1]{Juloski}.
 Substituting the values into the functions, we have
\begin{equation*}
T_{fu}(x_2)=2.2247x_2,
\end{equation*}
\begin{equation*}
T_{fl}(x_{3})=\left\{\begin{array}{lr}
$[$0.1642+0.0603(1-\frac{2}{1+e^{5.7468|x_{3}|}})\\
-0.2267(1-\frac{2}{1+e^{0.2941|x_{3}|}})$]${\rm sign}(x_{3})+0.0109x_{3}, &x_{3}\neq0\\
$[$-0.1642,~0.1642$]$, &\;\;x_{3}=0.
\end{array}\right.
\end{equation*}
We can rewrite   (\ref{equ:H5}) as follows 
\begin{equation}\label{equ:H6}
\left\{\begin{array}{lr}
\dot{x}=Ax+B\omega+Gu\\
\omega\in -T_{fl}(Cx)\\
y=Fx,
\end{array}\right.
\end{equation}
where $A=\left[\begin{array}{ccc}
0&1&-1\\
-\frac{k_{\theta}}{J_{u}}&-\frac{b_{up}}{J_{u}}&0\\
\frac{k_{\theta}}{J_{l}}&0&0\\
\end{array}\right]$, $G=\left[\begin{array}{c}
0\\
\frac{k_{m}}{J_{u}}\\
0
\end{array}\right]$, $B=\left[\begin{array}{c}
0\\
0\\
\frac{1}{J_{l}}
\end{array}\right]$, $C=[0 ~0 ~1]$, $F=[1~ 0 ~0]$.
Substituting the estimated values of the parameters, we have
\begin{equation*}
A=\left[\begin{array}{ccc}
0&1&-1\\
-0.1526&-4.6688&0\\
2.2301&0&0\\
\end{array}\right],\quad G=\left[\begin{array}{c}
0\\
8.3841\\
0
\end{array}\right],\quad B=\left[\begin{array}{c}
0\\
0\\
30.6748
\end{array}\right].
\end{equation*}
Since $T_{fl}(x_{3})$ is not monotone, we use the loop transformation $\tilde{T}_{fl}(Cx)=T_{fl}(Cx)-mCx$, where $m=-0.021$. Then
\begin{equation*}
\tilde{T}_{fl}(x_{3})=\left\{\begin{array}{lr}
$[$0.1642+0.0603(1-\frac{2}{1+e^{5.7468|x_{3}|}})\\
-0.2267(1-\frac{2}{1+e^{0.2941|x_{3}|}})$]${\rm sign}(x_{3})+0.0319x_{3}, &x_{3}\neq0\\
$[$-0.1642,~0.1642$]$, &x_{3}=0.
\end{array}\right.
\end{equation*}
Under the transformation, we have a new system
\begin{equation}\label{equ:H6}
\left\{\begin{array}{lr}
\dot{x}=\bar{A}x+B\omega+Gu\\
\omega\in -\mathcal{F}(Cx)\\
y=Fx,
\end{array}\right.
\end{equation}
%
where
\begin{equation*}
\bar{A}=A-mBC=\left[\begin{array}{ccc}
0&1&-1\\
-0.1526&-4.6688&0\\
2.2301&0&0.6442\\
\end{array}\right],\quad G=\left[\begin{array}{c}
0\\
8.3841\\
0
\end{array}\right],
\end{equation*}
\begin{equation*}
\quad B=\left[\begin{array}{c}
0\\
0\\
30.6748
\end{array}\right],  \quad C=[0 ~0 ~1],\quad F=[1~ 0 ~0].
\end{equation*}
\begin{equation*}
\mathcal{F}(\lambda)=\left\{\begin{array}{lr}
$[$0.1642+0.0603(1-\frac{2}{1+e^{5.7468|\lambda|}})\\
~~~~-0.2267(1-\frac{2}{1+e^{0.2941|\lambda|}})$]${\rm sign}(\lambda)+0.0319\lambda, &\lambda\neq0\\
$[$-0.1642,~0.1642$]$, &\lambda=0.
\end{array}\right.
\end{equation*}
Here we choose $u=2$.
Solving the following LMIs:
\begin{equation}\label{equ:H7}
\begin{split}
(\bar{A}-LF)^{T}P+P(\bar{A}-LF)&=-Q\\
B^TP=C-KF,
\end{split}
\end{equation}
we  obtain
\begin{equation*}
P=\left[\begin{array}{ccc}
0.2958&0.0417&0.0600\\
0.0417&0.0286&0\\
0.0600&0&0.0326\\
\end{array}\right],~Q=\left[\begin{array}{ccc}
-0.1327&-0.0064&-0.0158\\
-0.0064&-0.1837&0.0183\\
0.0158&0.0183&-0.0780
\end{array}\right],
\end{equation*}
\begin{equation*}
L=\left[\begin{array}{c}
3.3069\\
-1.2140\\
-12.2290
\end{array}\right],~K=-1.8392, ~\epsilon=0.0714.
\end{equation*}
{Now  suppose that the system (\ref{equ:H6}) is influenced by an uncertainty $\xi(t,x)$, which is described as follows:}
\begin{equation}\label{equ:H8}
\left\{\begin{array}{lr}
\dot{x}=\bar{A}x+B\omega+Gu+\xi(t,x)\\
\omega\in -\mathcal{F}(Cx)\\
y=Fx.
\end{array}\right.
\end{equation}
First we take the unknown  $\xi(t,x)=\xi^{(1)}=\left( \begin{array}{ccc}
1 \\ \\
4\sin x_2 \\ \\
 \cos t
\end{array} \right)$.
   \begin{figure}[h!]
\begin{center}
\includegraphics[scale=0.5]{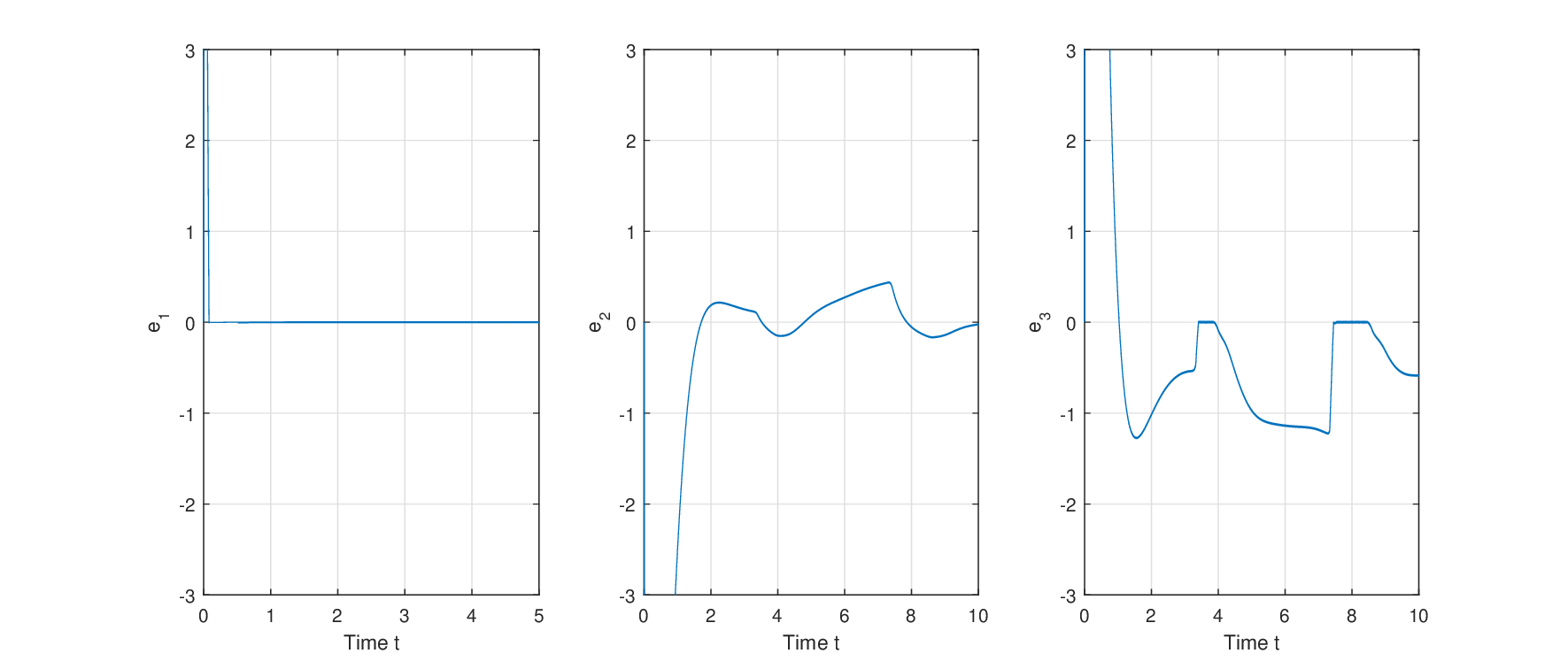}
\caption{Errors using  (\ref{obs}) applied to Example 2 with the uncertainty $\xi^{(1)}$} 
\end{center}
\end{figure}
Using the approximate sliding mode observer (\ref{obs}), we can see that observation error $e_1$ converges to zero very fast and the total error module $\Vert e(t) \Vert$ belongs to the attractive set $\Omega'=[0,7.75]$ in finite time (Figure 2). The numerical result is consistent with Theorem \ref{mainth}  when the uncertainty is chosen randomly. 
Next we consider  the unknown  $$\xi(t,x)=\xi^{(2)}=\left( \begin{array}{ccc}
16.0552 \\ \\
-23.4092 \\ \\
-29.5495
\end{array} \right)+\left( \begin{array}{ccc}
e^{-t} \\ \\
e^{-2t}\\ \\
e^{-1.5t}
\end{array} \right),$$ which has 2 parts: the first part in ${\rm Im}(P^{-1}F^T)$ and the second part vanishes when the time is large. Then the  sliding mode observer (\ref{obs}) is indeed the observer of the original system (Figure 3).
 \begin{figure}[h!]
\begin{center}
\includegraphics[scale=0.5]{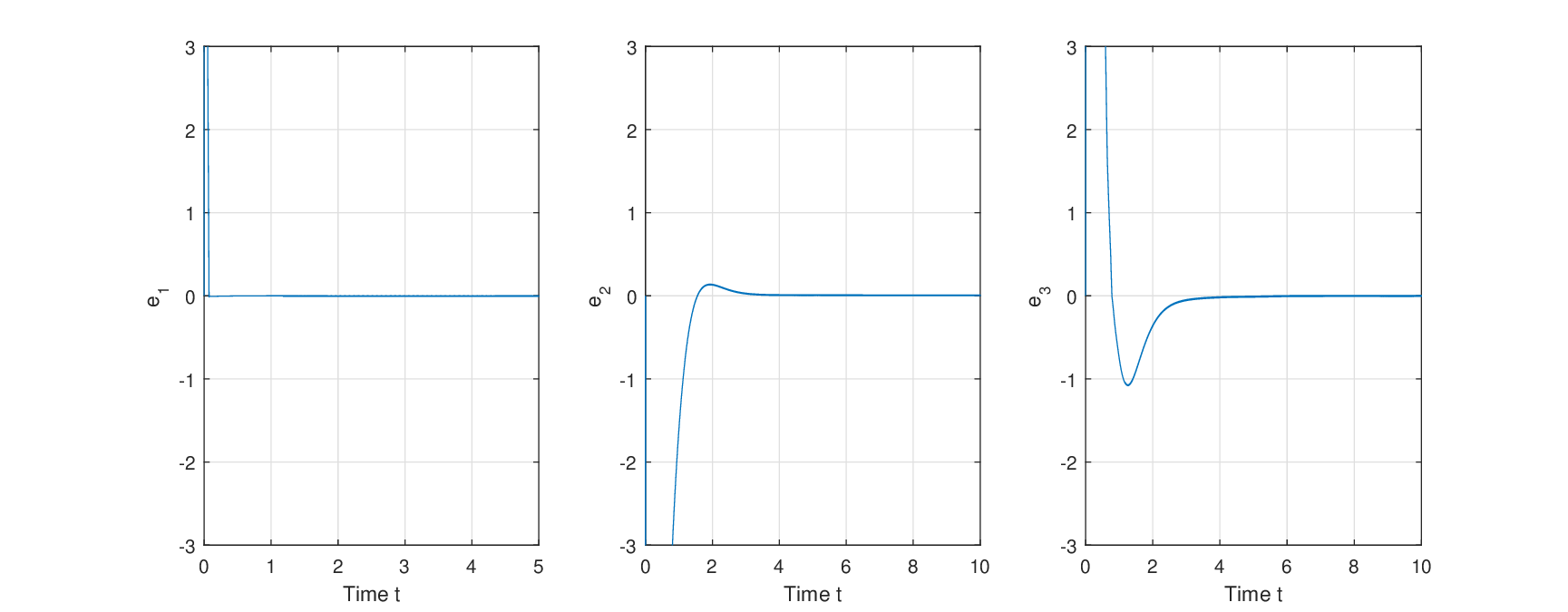}
\caption{Errors using  (\ref{obs}) applied to Example 2 with the uncertainty $\xi^{(2)}$} 
\end{center}
\end{figure}
 \end{example}
 \section{Conclusions}
{
In this paper, we introduce a {sliding mode observer, which also functions as a $T$- observer and a strong $H^\infty$ observer, designed for a general category of set-valued Lur'e dynamical systems}. Importantly, our approach accommodates scenarios where the uncertainty does not fall within the range of observation. Consequently, we are able to obtain accurate state estimations for the original systems. Furthermore, in cases where the unobservable portion of uncertainty tends to vanish when the time is large, we achieve exact observers. 
It would be interesting to investigate strategies that enhance the robustness of our proposed observer, enabling it to handle increasingly complex and dynamic uncertainties or disturbances. Exploring ways to incorporate adaptive control strategies into our observer framework would be a promising direction, facilitating its adaptation to evolving system conditions. Optimizing and reducing the size of the attractive set could be a valuable pursuit. The practical application and validation of our observer in real-world systems, such as autonomous vehicles, robotics, or industrial processes, present good opportunities for future exploration. 
Extending our observer framework to address the challenges posed by  set-valued uncertainties would further broaden its applicability and impact. 
These open questions require further investigation and are beyond the scope of the current paper. They will be the focus of a new research project.
}

\end{document}